

\documentclass[preprint,12pt]{elsarticle}




\usepackage{amssymb}
\usepackage{amsthm}

\usepackage{amsmath}

\usepackage{dsfont}

\usepackage[T1]{fontenc}


\newtheorem{theorem}{Theorem}[section]
\newtheorem{lemma}[theorem]{Lemma}

\newtheorem{corollary}[theorem]{Corollary}

\theoremstyle{definition}

\newtheorem{example}[theorem]{Example}

\theoremstyle{remark}
\newtheorem{remark}[theorem]{Remark}

\makeatletter
\def\@author#1{\g@addto@macro\elsauthors{\normalsize%
    \def\baselinestretch{1}%
    \upshape\authorsep#1\unskip\textsuperscript{%
      \ifx\@fnmark\@empty\else\unskip\sep\@fnmark\let\sep=,\fi
      \ifx\@corref\@empty\else\unskip\sep\@corref\let\sep=,\fi
      }%
    \def\authorsep{\unskip,\space}%
    \global\let\@fnmark\@empty
    \global\let\@corref\@empty  
    \global\let\sep\@empty}%
    \@eadauthor={#1}
}
\makeatother


\begin{document}

\begin{frontmatter}



\title{A probabilistic generalization of the Stirling numbers of the second kind}


\author{Jos\'{e} A. Adell\corref{a1}\fnref{thanks}}
\ead{adell@unizar.es}

\author{Alberto Lekuona\fnref{thanks}}
\ead{lekuona@unizar.es}

\cortext[a1]{Corresponding author.}
\fntext[thanks]{The authors are partially supported by Research Projects DGA (E-64), MTM2015-67006-P, and by FEDER funds.}

\address{Departamento de M\'{e}todos Estad\'{\i}sticos, Facultad de
Ciencias, Universidad de Zaragoza, 50009 Zaragoza (Spain)}

\begin{abstract}
Associated to each random variable $Y$ having a finite moment generating function, we introduce a different generalization of the Stirling numbers of the second kind. Some characterizations and specific examples of such generalized numbers are provided. As far as their applications are concerned, attention is focused in extending in various ways the classical formula for sums of powers on arithmetic progressions. Illustrations involving rising factorials, Bell polynomials, polylogarithms, and a certain class of Appell polynomials, in connection with appropriate random variables $Y$ in each case, are discussed in detail.
\end{abstract}

\begin{keyword}



generalized Stirling numbers of the second kind \sep sum of powers formula \sep difference operators \sep Bell polynomials \sep polylogarithms \sep Appell polynomials.

\MSC primary 05A19 \sep 60E05; secondary 11M35 \sep 33C45
\end{keyword}

\end{frontmatter}


\section{Introduction}\label{s1}

Let $\mathds{N}$ be the set of positive integers and $\mathds{N}_0=\mathds{N}\cup \{0\}$. Unless otherwise specified, we assume throughout this paper that $f:\mathds{R}\to \mathds{R}$ is an arbitrary function, $m,n,N\in \mathds{N}_0$, and $x\in \mathds{R}$. We also denote by $I_n(x)=x^n$ the monomial function.

The celebrated formula for sums of powers on arithmetic progressions states that
\begin{equation}\label{eq1.1}
    \sum_{k=0}^N I_n(x+k)=\dfrac{B_{n+1}(x+N+1)-B_{n+1}(x)}{n+1},
\end{equation}
where $B_n(x)$ is the $n$th Bernoulli polynomial. Since the time of James Bernoulli (1655--1705), different generalizations of such sums have been obtained (see, for instance, Kannappan and Zhang \cite{KanZha2002}, Guo and Zeng \cite{GuoZen2005}, Adell and Lekuona \cite{AdeLek2017}, Kim and Kim \cite{KimKim2017}, and the references therein).

On the other hand, the usual $m$th forward difference of $f$ is defined as
\begin{equation}\label{eq1.2}
    \Delta^m f(x)=\sum_{k=0}^m \binom{m}{k}(-1)^{m-k} f(x+k).
\end{equation}
From our point of view, an interesting fact is that the sums in \eqref{eq1.1} can also be computed in terms of forward differences of the monomial function $I_n(x)$. Actually, we have (see, for instance, Rosen \cite[p. 199]{Ros2000} or Spivey \cite{Spi2007})
\begin{equation}\label{eq1.3}
    \sum_{k=0}^N I_n(x+k)= \sum_{m=0}^{n\wedge N} \binom{N+1}{m+1} \Delta^m I_n(x),
\end{equation}
where $n\wedge N=\min (n,N)$. Computationally, formulas \eqref{eq1.1} and \eqref{eq1.3} are equivalent in the sense that the computation of a sum of $N+1$ terms is reduced to the computation of a polynomial in $N$ of degree $n+1$.

Finally, denote by
\begin{equation}\label{eq1.4}
    S(n,m;x):=\dfrac{\Delta^m I_n(x)}{m!},\quad m\leq n,
\end{equation}
the Stirling polynomials of the second kind, so that
\begin{equation}\label{eq1.5}
    S(n,m):=S(n,m;0),\quad m\leq n,
\end{equation}
are the classical Stirling numbers of the second kind (see Abramowitz and Stegun \cite{AbrSte1992} or Roman \cite[p. 60]{Rom1984} for other equivalent definitions). Obviously, formula \eqref{eq1.3} can be rewritten in terms of $S(n,m;x)$.

In this paper, we consider the following probabilistic generalization of the polynomials $S(n,m;x)$. Suppose that $Y$ is a random variable having a finite moment generating function, i.e.,
\begin{equation}\label{eq1.6}
    \mathds{E}e^{r|Y|}<\infty,
\end{equation}
for some $r>0$, where $\mathds{E}$ stands for mathematical expectation. Let $(Y_j)_{j\geq 1}$ be a sequence of independent copies of $Y$ and denote by
\begin{equation}\label{eq1.7}
    S_k=Y_1+\cdots +Y_k,\quad k\in \mathds{N}\qquad (S_0=0).
\end{equation}
We define the Stirling polynomials of the second kind associated to $Y$ as
\begin{equation}\label{eq1.8}
    S_Y(n,m;x)=\dfrac{1}{m!}\sum_{k=0}^m \binom{m}{k}(-1)^{m-k}\mathds{E}(x+S_k)^n,\quad m\leq n,
\end{equation}
as well as the Stirling numbers of the second kind associated to $Y$ as
\begin{equation}\label{eq1.9}
    S_Y(n,m)=S_Y(n,m;0),\quad m\leq n.
\end{equation}
Note that if $Y=1$, we have from \eqref{eq1.4}, \eqref{eq1.5}, and \eqref{eq1.8}
\begin{equation*}
    S_1(n,m;x)=S(n,m;x),\qquad S_1(n,m)=S(n,m).
\end{equation*}

Motivated by various specific problems, different generalizations of the Stirling numbers $S(n,m)$ have been considered in the literature (see, for instance, Comtet \cite{Com1972}, Hsu and Shine \cite{HsuShi1998}, Mez\H{o} \cite{Mez2010}, Luo and Srivastava \cite{LuoSri2011}, Mihoubi and Maamra \cite{MihMaa2012}, Wuyungaowa \cite{Wuy2013}, Caki\'{c} \textit{et al.} \cite{CakElDMil2013}, Mihoubi and Tiachachat \cite{MihTia2014}, and El-Desouky \textit{et al.} \cite{ElDCakShi2017}, among many others). As far as the applications of the polynomials $S_Y(n,m;x)$ are concerned, attention is focused in showing the following extension of formulas \eqref{eq1.3} and \eqref{eq1.4}
\begin{equation}\label{eq1.9.2}
    \sum_{k=0}^N \mathds{E}I_n(x+S_k)=\sum_{m=0}^{n\wedge N}\binom{N+1}{m+1}m! S_Y(n,m;x),
\end{equation}
and its consequences. The versatility of formula \eqref{eq1.9.2} comes from the fact that each choice of the random variable $Y$ gives us a different (sometimes unexpected) summation formula. This is illustrated in Section~\ref{s4}, where we obtain summation formulas involving rising factorials, Bell polynomials, and polylogarithms by considering random variables $Y$ having the exponential, the Poisson, and the geometric distributions, respectively. Analogous formulas are obtained for a certain class of Appell polynomials by considering the normal, the logistic and the hyperbolic secant distributions, among others.

In Section~\ref{s3}, we give different characterizations of the generalized polynomials $S_Y(n,m;x)$, as well as some extensions of formula \eqref{eq1.9.2}, whereas specific examples of the generalized Stirling numbers $S_Y(n,m)$ are presented in Section~\ref{s4}. To clarify the proof of such results, we have included in Section~\ref{s2} a reformulation of equality \eqref{eq1.3}.

\section{The classical formula for sums of powers}\label{s2}

Our approach to this classical formula is based on the following two simple properties. The first one (see, for instance, Flajolet and Vepstas \cite{FlaVep2008}, Mu \cite{Mu2013} or Adell and Lekuona \cite{AdeLek2017}) says that
\begin{equation}\label{eq2.10}
    f(x+k)=\sum_{m=0}^k \binom{k}{m}\Delta^m f(x),\quad k\in \mathds{N}_0,
\end{equation}
whereas the second one states that
\begin{equation}\label{eq2.11}
    \Delta^m p_n(x)=0,\quad m=n+1,n+2,\ldots ,
\end{equation}
for any polynomial $p_n(x)$ of exact degree $n$, as follows easily from \eqref{eq1.2}.

\begin{theorem}\label{th1}
We have
\begin{equation}\label{eq2.12}
    \sum_{k=0}^N I_n(x+k)= \sum_{m=0}^{n\wedge N} \binom{N+1}{m+1} m! S(n,m;x)=\sum_{k=0}^{n\wedge N} c_{n,N}(k) I_n(x+k),
\end{equation}
where
\begin{equation}\label{eq2.13}
    c_{n,N}(k)=\sum_{m=k}^{n\wedge N}\binom{N+1}{m+1}\binom{m}{k}(-1)^{m-k},\quad k=0,1,\ldots ,n\wedge N.
\end{equation}

Moreover,
\begin{equation}\label{eq2.14}
    \sum_{k=0}^{n\wedge N}c_{n,N}(k)=N+1.
\end{equation}
\end{theorem}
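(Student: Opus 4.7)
The plan is to establish the three claims of \eqref{eq2.12}--\eqref{eq2.14} by elementary double-sum manipulations, using only the two properties \eqref{eq2.10} and \eqref{eq2.11}.

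For the first equality in \eqref{eq2.12}, I would apply \eqref{eq2.10} with $f=I_n$ to write $I_n(x+k)=\sum_{m=0}^{k}\binom{k}{m}\Delta^m I_n(x)$. Since $I_n$ has exact degree $n$, property \eqref{eq2.11} truncates the upper limit to $m=k\wedge n$. Summing over $k=0,\dots,N$ and interchanging the order of summation (so that the outer index $m$ runs from $0$ to $n\wedge N$ and the inner index $k$ runs from $m$ to $N$), the inner sum becomes $\sum_{k=m}^{N}\binom{k}{m}=\binom{N+1}{m+1}$ by the hockey-stick identity. Invoking the definition \eqref{eq1.4} of the Stirling polynomials $S(n,m;x)$ converts $\Delta^m I_n(x)$ into $m!\,S(n,m;x)$ and closes this step.

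For the second equality in \eqref{eq2.12}, I would substitute the definition \eqref{eq1.2} of $\Delta^m I_n(x)$ into the middle expression already obtained, and swap the order of summation between $m$ and $k$: $k$ runs from $0$ to $n\wedge N$, while for fixed $k$ the index $m$ runs from $k$ to $n\wedge N$. The coefficient of $I_n(x+k)$ that emerges is precisely $c_{n,N}(k)$ as defined in \eqref{eq2.13}.

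For identity \eqref{eq2.14}, the cleanest route is a direct calculation: substitute the definition \eqref{eq2.13} into $\sum_{k=0}^{n\wedge N}c_{n,N}(k)$ and swap the order of summation to obtain
\begin{equation*}
\sum_{m=0}^{n\wedge N}\binom{N+1}{m+1}\sum_{k=0}^{m}\binom{m}{k}(-1)^{m-k}=\sum_{m=0}^{n\wedge N}\binom{N+1}{m+1}\,0^{m},
\end{equation*}
which collapses to $\binom{N+1}{1}=N+1$. An equally clean alternative is to notice that both sides of the second equality in \eqref{eq2.12} are polynomials in $x$ with leading term of degree $n$, whose leading coefficients are $N+1$ and $\sum_{k=0}^{n\wedge N}c_{n,N}(k)$, respectively; equating these gives the result. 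I do not foresee any real obstacle; the only point to watch is handling the truncation $m\leq n\wedge N$ consistently when interchanging the sums.
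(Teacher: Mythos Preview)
Your proposal is correct and follows essentially the same route as the paper: both derive the first equality from \eqref{eq2.10}, \eqref{eq2.11} and the hockey-stick identity, obtain the second by expanding $\Delta^m I_n(x)$ via \eqref{eq1.2} and swapping the $m$- and $k$-sums, and prove \eqref{eq2.14} by interchanging sums so that the inner binomial sum $\sum_{k=0}^{m}\binom{m}{k}(-1)^{m-k}$ collapses to the $m=0$ term. The paper writes this last step with the substitution $l=m-k$, $s=k+l$, but it is the same computation; your leading-coefficient alternative is a valid extra route the paper does not mention.
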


\begin{proof}
Choosing $f(x)=I_n(x)$ in \eqref{eq2.10} and taking into account \eqref{eq1.4}, we have
\begin{equation}\label{eq2.15}
    \begin{split}
       & \sum_{k=0}^N I_n(x+k)= \sum_{k=0}^N \sum_{m=0}^k \binom{k}{m} m! S(n,m;x) \\
        & = \sum_{m=0}^N m! S(n,m;x) \sum_{k=m}^N \binom{k}{m}=\sum_{m=0}^{n\wedge N}\binom{N+1}{m+1}m! S(n,m;x),
    \end{split}
\end{equation}
where in the last equality we have used \eqref{eq1.4}, \eqref{eq2.11}, and the elementary combinatorial identity
\begin{equation}\label{eq2.15.2}
    \sum_{k=m}^N \binom{k}{m}=\binom{N+1}{m+1},\quad m\leq N.
\end{equation}
Again by \eqref{eq1.4}, the last term in \eqref{eq2.15} equals to
\begin{equation*}
    \sum_{m=0}^{n\wedge N}\binom{N+1}{m+1}\sum_{k=0}^m \binom{m}{k}(-1)^{m-k}I_n(x+k)=\sum_{k=0}^{n\wedge N} c_{n,N}(k) I_n(x+k),
\end{equation*}
where $c_{n,N}(k)$ is defined in \eqref{eq2.13}. Finally, we have
\begin{equation*}
    \begin{split}
        & \sum_{k=0}^{n\wedge N}c_{n,N}(k)=\sum_{k=0}^{n\wedge N} \sum_{l=0}^{n\wedge N-k}\binom{N+1}{k+l+1}\binom{k+l}{k}(-1)^l \\
         & = \sum_{s=0}^{n\wedge N}\sum_{k+l=s}\binom{N+1}{k+l+1}\binom{k+l}{k}(-1)^l\\
         &=\sum_{s=0}^{n\wedge N} \binom{N+1}{s+1}\sum_{k=0}^s \binom{s}{k}(-1)^{s-k}=N+1.
     \end{split}
\end{equation*}
This shows \eqref{eq2.14} and completes the proof.
\end{proof}

\begin{remark}
If $N\leq n$, then $c_{n,N}(k)=1$, $k=0,1,\ldots ,N$, and therefore the last equality in \eqref{eq2.12} is trivial. If $N>n$, then the finite sequence $c_{n,N}(k)$, $k=0,1,\ldots ,n$ is an alternating sequence, since it can be checked that
\begin{equation*}
    c_{n,N}(k)=1+(-1)^{n-k}\sum_{i=0}^{N-(n+1)}\binom{n+1+i}{k}\binom{n-k+i}{n-k},\quad k=0,1,\ldots, n.
\end{equation*}
Details are omitted.
\end{remark}

\begin{theorem}\label{th1.1}
Let $p_n(x)$ be a polynomial of exact degree $n$. Then,
\begin{equation}\label{eq2.15.3}
    \sum_{k=0}^N p_n(x+k)=\sum_{m=0}^{n\wedge N}\binom{N+1}{m+1} \Delta^m p_n(x)=\sum_{k=0}^{n\wedge N} c_{n,N}(k) p_n(x+k),
\end{equation}
where the coefficients $c_{n,N}(k)$ are defined in \eqref{eq2.13}.
\end{theorem}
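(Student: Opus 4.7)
The plan is to replay the proof of Theorem~\ref{th1} essentially verbatim, with the key point being that nothing in that argument really used the monomial character of $I_n(x)$; it only used the two structural facts \eqref{eq2.10} and \eqref{eq2.11}, the latter of which holds for \emph{any} polynomial $p_n$ of exact degree $n$. So I would explicitly note this at the outset and then carry out the same chain of manipulations with $p_n$ in place of $I_n$.

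Concretely, I would first substitute $f=p_n$ into \eqref{eq2.10}, obtaining $p_n(x+k)=\sum_{m=0}^k\binom{k}{m}\Delta^m p_n(x)$, and sum this equality over $k=0,1,\ldots,N$. After swapping the order of summation, the inner sum $\sum_{k=m}^N\binom{k}{m}$ collapses to $\binom{N+1}{m+1}$ by the combinatorial identity \eqref{eq2.15.2}. At that stage I have
\[
\sum_{k=0}^N p_n(x+k)=\sum_{m=0}^N \binom{N+1}{m+1}\Delta^m p_n(x),
\]
and invoking \eqref{eq2.11} lets me truncate the upper index to $n\wedge N$ (if $N\le n$ the truncation is automatic, and if $N>n$ the terms with $m>n$ vanish because $\Delta^m p_n\equiv 0$). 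This yields the first equality in \eqref{eq2.15.3}.

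For the second equality, I would expand $\Delta^m p_n(x)$ using its definition \eqref{eq1.2} and interchange the order of summation exactly as done in the monomial case in the proof of Theorem~\ref{th1}, collecting the coefficient of $p_n(x+k)$ to recover precisely $c_{n,N}(k)$ from \eqref{eq2.13}. No identities beyond those already used in Theorem~\ref{th1} are needed.

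I do not expect any real obstacle: the only thing to verify carefully is that the truncation step is legitimate, which is exactly the content of \eqref{eq2.11} and requires $p_n$ to have \emph{exact} degree $n$ (so that $\Delta^m p_n$ vanishes for $m>n$ but the hypothesis is no stronger than in Theorem~\ref{th1}). Thus the proof reduces to observing that Theorem~\ref{th1} is really a statement about polynomials of exact degree $n$, with $I_n$ serving only as a representative.
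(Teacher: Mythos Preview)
Your proposal is correct and matches the paper's own proof exactly: the paper simply states that the argument of Theorem~\ref{th1} goes through verbatim with $p_n(x)$ in place of $I_n(x)$, using only \eqref{eq2.10} and \eqref{eq2.11}. Your write-up makes this explicit and adds no unnecessary ingredients.
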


\begin{proof}
The proof is the same as that of Theorem~\ref{th1} replacing the monomial $I_n(x)$ by the polynomial $p_n(x)$ and applying formulas \eqref{eq2.10} and \eqref{eq2.11}.
\end{proof}

We emphasize that the last two expressions in \eqref{eq2.15.3} are computationally equivalent. The last one should be used when the forward differences $\Delta^m p_n(x)$ are difficult to evaluate. In this regard, note that the coefficients $c_{n,N}(k)$ can be easily computed and stored once and for all in order to use them in any specific example (see Section~\ref{s4} and particularly Theorem~\ref{th12}).

\section{The polynomials $S_Y(n,m;x)$}\label{s3}

In what follows, we denote by $(U_j)_{j\geq 1}$ a sequence of independent identically distributed random varaibles having the uniform distribution on $[0,1]$, and assume that $(U_j)_{j\geq 1}$ and $(Y_j)_{j\geq 1}$, as given in \eqref{eq1.7}, are mutually independent. We consider the difference operator
\begin{equation*}
    \Delta_y^1 f(x)=f(x+y)-f(x),\quad y\in \mathds{R},
\end{equation*}
together with the iterates
\begin{equation}\label{eq3.16}
    \Delta_{y_1,\ldots, y_m}f(x)=(\Delta_{y_1}^1\circ \cdots \circ \Delta_{y_m}^1)f(x),\quad (y_1,\ldots, y_m)\in \mathds{R}^m,\quad m\in \mathds{N}.
\end{equation}
It seems that such iterates have been independently introduced by Mrowiec \textit{et al.} \cite{MroRajWas2017} in connection with Wright-convex functions of order $m$ and by Dilcher and Vignat \cite{DilVig2016} in the context of convolution identities for Bernoulli polynomials (see also \cite{AdeLek2017.2}). For any $m\in \mathds{N}$, denote by
\begin{equation*}
    I_m(k)=\left \{(i_1,\ldots ,i_k)\in \{1,\ldots ,m\}:\ i_r\neq i_s\ \textrm{if}\ r\neq s \right \},\quad k=1,\ldots ,m.
\end{equation*}
The following lemma has been shown in \cite{AdeLek2017.2} (see also \cite[Section~2]{AdeLek2018}).

\begin{lemma}\label{l4}
Let $m\in \mathds{N}$ and $(y_1,\ldots ,y_m)\in \mathds{R}^m$. Then,
\begin{equation}\label{eq3.17}
    \Delta_{y_1,\ldots ,y_m}^m f(x)=(-1)^mf(x)+\sum_{k=1}^m (-1)^{m-k} \sum_{I_m(k)} f(x+y_{i_1}+\cdots +y_{i_k}).
\end{equation}
If, in addition, $f$ is $m$ times differentiable, then
\begin{equation}\label{eq3.18}
    \Delta_{y_1,\ldots, y_m}^m f(x)=y_1\cdots y_m \mathds{E}f^{(m)}(x+y_1U_1+\cdots +y_mU_m).
\end{equation}
\end{lemma}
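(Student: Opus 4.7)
The plan is to prove both parts by viewing each single difference $\Delta^1_{y_i}$ as the operator $T_{y_i} - I$, where $T_y$ denotes translation by $y$ and $I$ the identity; since all translations commute, so do the operators $\Delta^1_{y_i}$, and this commutativity drives the argument.

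For part \eqref{eq3.17}, I would expand the product of operators. From \eqref{eq3.16},
\[
    \Delta^m_{y_1,\ldots,y_m} = \prod_{i=1}^m (T_{y_i} - I) = \sum_{S \subseteq \{1,\ldots,m\}} (-1)^{m-|S|} \, T_{\sum_{i \in S} y_i},
\]
where the last equality comes from distributing the product (no ordering issues arise since the $T_{y_i}$ commute). Applying both sides to $f$, evaluating at $x$, and grouping the subsets $S$ according to their cardinality $k$ — with $I_m(k)$ indexing the $k$-element choices — yields \eqref{eq3.17}; the $k=0$ term furnishes the isolated $(-1)^m f(x)$.

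For part \eqref{eq3.18}, I would proceed by induction on $m$. For the base case $m=1$, the fundamental theorem of calculus gives
\[
    \Delta^1_{y_1} f(x) = f(x+y_1) - f(x) = \int_0^{y_1} f'(x+t)\,dt = y_1 \int_0^1 f'(x + y_1 u)\,du = y_1\,\mathds{E} f'(x + y_1 U_1).
\]
For the inductive step, using that the operators commute, write $\Delta^m_{y_1,\ldots,y_m} f(x) = \Delta^{m-1}_{y_1,\ldots,y_{m-1}} \bigl( \Delta^1_{y_m} f \bigr)(x)$ and apply the base case to obtain $\Delta^1_{y_m} f(x) = y_m\,\mathds{E} f'(x + y_m U_m)$. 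Since $\Delta^{m-1}_{y_1,\ldots,y_{m-1}}$ is a finite linear combination of translations in $x$, it commutes with the expectation over $U_m$; applying the inductive hypothesis to the smooth function $x \mapsto f'(x + y_m U_m)$ (with $U_m$ fixed) and then using the independence of $U_1,\ldots,U_m$ to merge the two expectations yields \eqref{eq3.18}.

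The main obstacle, such as it is, lies in part \eqref{eq3.18}: justifying the interchange of the difference operator $\Delta^{m-1}_{y_1,\ldots,y_{m-1}}$ with the expectation over $U_m$, and likewise the iterated interchanges when the inductive hypothesis is unfolded. This is harmless because $\Delta^{m-1}_{y_1,\ldots,y_{m-1}}$ is a finite alternating sum of evaluations and $f^{(m)}$ is continuous, so Fubini applies directly on the bounded cube $[0,1]^m$ without any integrability subtlety; I would note this briefly rather than dwell on it.
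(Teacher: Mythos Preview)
Your argument is correct and is exactly the standard route: expand the commuting product $\prod_i(T_{y_i}-I)$ over subsets for \eqref{eq3.17}, and run the one-variable fundamental theorem of calculus together with induction and Fubini for \eqref{eq3.18}. There is no proof in the paper to compare against: the authors do not prove Lemma~\ref{l4} here but instead cite their earlier work \cite{AdeLek2017.2} (and \cite{AdeLek2018}) for it, so your write-up would in fact supply what the present paper omits.

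Two minor remarks. First, in the inductive step you call $x\mapsto f'(x+y_mU_m)$ ``smooth''; under the stated hypothesis it is only $(m-1)$ times differentiable, which is precisely what the inductive hypothesis needs, so just say that. Relatedly, ``$f^{(m)}$ is continuous'' is not guaranteed by mere $m$-fold differentiability; the clean way to justify the interchanges is to note that at each stage you are integrating a \emph{derivative} (hence a Baire-one function, and in any case the FTC identity $f(x+y)-f(x)=\int_0^y f'(x+t)\,dt$ holds for differentiable $f$ with Lebesgue-integrable derivative), and that the outer operator is a finite sum of point evaluations, so no genuine Fubini issue arises. In the paper's applications $f$ is always a polynomial or an exponential, so this is cosmetic. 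Second, be aware that the paper's set $I_m(k)$ is literally defined as ordered $k$-tuples of distinct indices; for \eqref{eq3.17} to match your subset expansion one must read it as indexing $k$-element subsets (or increasing tuples), which is clearly the intended meaning --- your phrase ``$k$-element choices'' is the right interpretation.
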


Setting $y_1=\cdots =y_m=1$ in \eqref{eq3.17}, we obtain the usual $m$th forward difference of $f$, that is,
\begin{equation}\label{eq3.19}
    \Delta_{1,\ldots ,1}^m f(x)=\Delta^m f(x),\quad m\in \mathds{N}.
\end{equation}

Let $m\in \mathds{N}$ and $(y_1,\ldots ,y_m)\in \mathds{R}^m$. If $p_n(x)$ is a polynomial of exact degree $n$, we have from \eqref{eq3.18}
\begin{equation}\label{eq3.20}
    \Delta_{y_1,\ldots ,y_m}^m p_n(x)=0,\quad m=n+1,n+2,\ldots
\end{equation}
Also, for the exponential function $e_z(x)=e^{zx}$, $z\in \mathds{C}$, we have from Lemma~\ref{l4}
\begin{equation}\label{eq3.21}
\begin{split}
    &\Delta_{y_1,\ldots ,y_m}^m e_z(x)= e^{zx}(e^{zy_1}-1)\cdots (e^{zy_m}-1)\\
    &=y_1\ldots y_m z^m \mathds{E}e^{z(x+y_1U_1+\cdots +y_mU_m)}.
\end{split}
\end{equation}

In order to write the formulas below in a unified way for any $m\in \mathds{N}_0$, we make use of the conventions
\begin{equation}\label{eq3.22}
    \Delta_\phi^0 f(x)=f(x),\quad \sum_{k=1}^0 x_k=0,\quad \prod_{k=1}^0 x_k =1.
\end{equation}

Despite the cumbersome expression in \eqref{eq3.17}, if we replace each $y_j$ by the random variable $Y_j$, as given in \eqref{eq1.7}, and then take expectations, we obtain the following auxiliary result already shown in \cite[Lemma~2.1]{AdeLek2018}.

\begin{lemma}\label{l5}
For any $m\in \mathds{N}_0$, we have
\begin{equation}\label{eq3.23}
    \mathds{E}\Delta_{Y_1,\ldots, Y_m}^m f(x)= \sum_{k=0}^m \binom{m}{k}(-1)^{m-k} \mathds{E}f(x+S_k).
\end{equation}
If, in addition, $f$ is $m$ times differentiable, then
\begin{equation}\label{eq3.24}
    \mathds{E}\Delta_{Y_1,\ldots ,Y_m}^m f(x)= \mathds{E}Y_1 \cdots Y_m f^{(m)}(x+Y_1U_1+\cdots +Y_mU_m).
\end{equation}
\end{lemma}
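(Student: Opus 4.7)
My plan is to derive both identities directly from Lemma~\ref{l4} by substituting the deterministic vector $(y_1,\ldots,y_m)$ by the random vector $(Y_1,\ldots,Y_m)$ and then taking expectations. Since the difference operator on the left-hand side is a polynomial combination of evaluations of $f$ and of the parameters $y_j$, this substitution produces, on each sample path, a well-defined real quantity; the moment generating function assumption~\eqref{eq1.6} together with the mutual independence of $(Y_j)_{j\geq 1}$ and $(U_j)_{j\geq 1}$ will justify all subsequent interchanges of sums, integrals, and expectations.

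To prove \eqref{eq3.23}, I start from the explicit expansion \eqref{eq3.17} of Lemma~\ref{l4} evaluated at $y_j=Y_j$, and take expectations term by term. Since the $Y_j$ are iid, for every $k\in\{1,\ldots,m\}$ and every $k$-subset $\{i_1,\ldots,i_k\}$ of $\{1,\ldots,m\}$ the partial sum $Y_{i_1}+\cdots+Y_{i_k}$ has the same distribution as $S_k$, whence $\mathds{E}f(x+Y_{i_1}+\cdots+Y_{i_k})=\mathds{E}f(x+S_k)$. Using that $|I_m(k)|=\binom{m}{k}$ (visible from the reduction \eqref{eq3.19} to the classical forward difference), the inner sum collapses to $\binom{m}{k}\mathds{E}f(x+S_k)$, and absorbing the leading $(-1)^m f(x)$ into the $k=0$ term via $S_0=0$ yields exactly \eqref{eq3.23}. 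The case $m=0$ is handled by the convention~\eqref{eq3.22}.

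For the differentiable version \eqref{eq3.24}, I apply identity \eqref{eq3.18} with $y_j=Y_j$ to obtain the pathwise equality
\begin{equation*}
\Delta_{Y_1,\ldots,Y_m}^m f(x)=Y_1\cdots Y_m\, g(Y_1,\ldots,Y_m),
\end{equation*}
where $g(y_1,\ldots,y_m):=\mathds{E}f^{(m)}(x+y_1U_1+\cdots+y_mU_m)$ depends only on the $U_j$. Taking expectations and invoking the mutual independence of $(Y_j)$ and $(U_j)$ together with the tower property (Fubini), the expectation $\mathds{E}\bigl[Y_1\cdots Y_m\,g(Y_1,\ldots,Y_m)\bigr]$ becomes the joint expectation $\mathds{E}[Y_1\cdots Y_m f^{(m)}(x+Y_1U_1+\cdots+Y_mU_m)]$, which is \eqref{eq3.24}.

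The main, and essentially only, technical point is to check that every expectation appearing above is absolutely integrable so that Fubini's theorem legitimately applies; this is where~\eqref{eq1.6} enters, ensuring integrability whenever $f$ and (for the second part) $f^{(m)}$ have at most exponential growth. This covers the polynomial and exponential test functions used throughout the paper, as in \eqref{eq3.20} and \eqref{eq3.21}, and thus presents no real obstacle.
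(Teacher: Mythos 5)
Your argument is correct and follows exactly the route the paper indicates: it takes Lemma~\ref{l4} with $y_j$ replaced by the random variables $Y_j$, uses the i.i.d.\ property so that each $Y_{i_1}+\cdots+Y_{i_k}$ has the law of $S_k$ (with $|I_m(k)|=\binom{m}{k}$), and then takes expectations, using independence of $(Y_j)$ and $(U_j)$ plus Fubini for \eqref{eq3.24}. The paper itself only sketches this (deferring to \cite[Lemma~2.1]{AdeLek2018}), so your write-up simply supplies the details of the same argument, including the appropriate remark that integrability under \eqref{eq1.6} is what licenses the interchanges for the polynomial and exponential test functions actually used.
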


Choosing $f(x)=I_n(x)$ in \eqref{eq3.23} and recalling \eqref{eq3.20}, we see that the definition of the Stirling polynomials of the second kind associated to $Y$, given in \eqref{eq1.8}, makes sense. In other words,
\begin{equation}\label{eq3.25}
    S_Y(n,m;x)=0,\quad m=n+1,n+2,\ldots
\end{equation}

The following result, which generalizes in various ways Proposition~2.2 in \cite{AdeLek2018}, gives us equivalent definitions of the polynomials $S_Y(n,m;x)$. In this respect, recall that the classical Stirling numbers of the first and second kind, respectively denoted by $s(n,k)$ and $S(n,k)$, $k=0,1,\ldots ,n$, are also defined by
\begin{equation}\label{eq3.26}
    (x)_n=\sum_{k=0}^n s(n,k)x^k,\quad x^n=\sum_{k=0}^n S(n,k) (x)_k,
\end{equation}
where $(x)_n=x(x-1)\cdots (x-n+1)$ is the descending factorial.

\begin{theorem}\label{th6}
For any $m,n\in \mathds{N}_0$ with $m\leq n$, we have
\begin{equation}\label{eq3.27}
    \begin{split}
      S_Y(n,m;x)& =\dfrac{1}{m!} \mathds{E} \Delta_{Y_1,\ldots ,Y_m}^m I_n(x)\\
      & = \binom{n}{m} \mathds{E}Y_1\cdots Y_m I_{n-m} (x+Y_1U_1+\cdots +Y_mU_m) \\
        & =\dfrac{1}{m!}\sum_{i=0}^n S(n,i) \sum_{k=0}^m \binom{m}{k}(-1)^{m-k}\mathds{E}(x+S_k)_i.
    \end{split}
\end{equation}

Equivalently, the polynomials $S_Y(n,m;x)$ are defined via their generating function as
\begin{equation}\label{eq3.28}
    \dfrac{e^{zx}}{m!}\left ( \mathds{E}e^{zY}-1\right )^m=\sum_{n=m}^\infty \dfrac{S_Y(n,m;x)}{n!}z^n,\quad z\in \mathds{C},\quad |z|\leq r.
\end{equation}
\end{theorem}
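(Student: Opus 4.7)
The plan is to establish the three equalities in \eqref{eq3.27} in order and then derive the generating function \eqref{eq3.28}.

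For the first equality, I would apply Lemma~\ref{l5} (equation \eqref{eq3.23}) to the monomial $f=I_n$: the right-hand side of \eqref{eq3.23} then reads $\sum_{k=0}^m\binom{m}{k}(-1)^{m-k}\mathds{E}(x+S_k)^n$, which is exactly $m!\,S_Y(n,m;x)$ by the definition \eqref{eq1.8}. For the second equality, I would apply the differential form \eqref{eq3.24} with $f=I_n$, noting that $I_n^{(m)}(x)=m!\binom{n}{m}I_{n-m}(x)$; dividing by $m!$ yields the required expression. For the third equality, I would substitute the second classical identity in \eqref{eq3.26}, namely $y^n=\sum_{i=0}^n S(n,i)(y)_i$ with $y=x+S_k$, into \eqref{eq1.8}, take expectations inside the finite sum, and interchange the (finite) orders of summation.

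For the generating function \eqref{eq3.28}, I would compute $\mathds{E}\Delta^m_{Y_1,\ldots,Y_m}e_z(x)$ in two different ways and equate the results. Applying \eqref{eq3.21} with each $y_j$ replaced by $Y_j$ and using the mutual independence of $Y_1,\ldots,Y_m$, I obtain
\[
\mathds{E}\Delta^m_{Y_1,\ldots,Y_m}e_z(x)=e^{zx}\prod_{j=1}^m\mathds{E}(e^{zY_j}-1)=e^{zx}(\mathds{E}e^{zY}-1)^m.
\]
Alternatively, applying \eqref{eq3.23} with $f=e_z$ and expanding $e^{z(x+S_k)}=\sum_{n\geq 0}z^n(x+S_k)^n/n!$ term by term,
\[
\mathds{E}\Delta^m_{Y_1,\ldots,Y_m}e_z(x)=\sum_{n=0}^\infty\frac{z^n}{n!}\sum_{k=0}^m\binom{m}{k}(-1)^{m-k}\mathds{E}(x+S_k)^n=\sum_{n=m}^\infty\frac{m!\,S_Y(n,m;x)}{n!}z^n,
\]
where \eqref{eq3.25} truncates the outer series to start at $n=m$. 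Dividing by $m!$ and equating with the first computation yields \eqref{eq3.28}.

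The main obstacle I anticipate is the analytic justification for interchanging the infinite series with the expectation (and with the finite sum over $k$) in the second computation. To this end, I would estimate $|e^{z(x+S_k)}|\leq e^{|z||x|}e^{|z|(|Y_1|+\cdots+|Y_k|)}$ and, using independence, bound $\mathds{E}|e^{z(x+S_k)}|\leq e^{|z||x|}(\mathds{E}e^{|z||Y|})^k$. For $|z|\leq r$, the moment generating function assumption \eqref{eq1.6} ensures this quantity is finite, so Fubini's theorem (equivalently, dominated convergence applied to the partial sums of the exponential series) legitimizes every interchange used above.
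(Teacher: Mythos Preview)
Your proof is correct and follows essentially the same route as the paper: Lemma~\ref{l5} applied to $I_n$ gives the first two equalities, substitution of \eqref{eq3.26} into \eqref{eq1.8} gives the third, and for \eqref{eq3.28} both you and the paper compute $\mathds{E}\Delta^m_{Y_1,\ldots,Y_m}e_z(x)$ via \eqref{eq3.21} on one side and via the power-series expansion of $e_z$ (invoking \eqref{eq3.20}/\eqref{eq3.25} to truncate) on the other. Your explicit Fubini justification using \eqref{eq1.6} is a welcome addition that the paper leaves implicit.
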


\begin{proof}
The case $m=0$ being trivial by convention \eqref{eq3.22}, assume that $m\in \mathds{N}$. The first two equalities in \eqref{eq3.27} readily follow from \eqref{eq1.8} and Lemma~\ref{l5}, by choosing $f(x)=I_n(x)$, whereas the third one is an immediate consequence of \eqref{eq1.8} and \eqref{eq3.26}.

To show \eqref{eq3.28}, replace $y_j$ by $Y_j$, $j=1,\ldots ,m$ in the first equality in \eqref{eq3.21} and then take expectations. Using the independence and the identical distribution of the random variables involved and the linearity of the generalized difference operator, we obtain
\begin{equation*}
    \dfrac{e^{zx}}{m!} (\mathds{E}e^{zY}-1)^m=\dfrac{1}{m!} \mathds{E}\Delta_{Y_1,\ldots ,Y_m}^m e_z(x)=\sum_{n=m}^\infty \dfrac{z^n}{n!}\dfrac{1}{m!}\mathds{E}\Delta_{Y_1,\ldots ,Y_m}^m I_n(x),
\end{equation*}
where in the last equality we have used \eqref{eq3.20}. This, together, with \eqref{eq3.27}, shows \eqref{eq3.28} and completes the proof.
\end{proof}

Choosing $Y=1$ in the second equality in \eqref{eq3.27}, we obtain the following probabilistic representation for the Stirling numbers of the second kind
\begin{equation}\label{eq3.28.2}
    S(n,m)= \binom{n}{m}\mathds{E}(U_1+\cdots +U_m)^{n-m}.
\end{equation}
This representation was already proved by Sun \cite{Sun2005}. On the other hand, the last equality in \eqref{eq3.27} is very useful to obtain explicit expressions for $S_Y(n,m)$ when $Y$ is a random variable taking values in $\mathds{N}_0$.

We are in a position to state the following result which extends Theorem~\ref{th1.1}.

\begin{theorem}\label{th7}
Let $p_n(x)$ be a polynomial of exact degree $n$. Then,
\begin{equation}\label{eq3.29}
\begin{split}
    &\sum_{k=0}^N \mathds{E}p_n(x+S_k)= \sum_{m=0}^{n\wedge N}\binom{N+1}{m+1}\mathds{E} \Delta_{Y_1,\ldots ,Y_m}^m p_n(x)\\
    &=\sum_{k=0}^{n\wedge N} c_{n,N}(k) \mathds{E}p_n(x+S_k),
\end{split}
\end{equation}

where $c_{n,N}(k)$ is defined in \eqref{eq2.13}.
\end{theorem}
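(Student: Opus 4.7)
The strategy mirrors the proof of Theorem~\ref{th1}, replacing the classical forward difference $\Delta^m$ by the expected generalised difference $\mathds{E}\Delta_{Y_1,\ldots,Y_m}^m$ and the deterministic shift $x+k$ by the random shift $x+S_k$. The first ingredient I would establish is a probabilistic analogue of \eqref{eq2.10}, namely
\begin{equation*}
\mathds{E}p_n(x+S_k)=\sum_{m=0}^k \binom{k}{m}\mathds{E}\Delta_{Y_1,\ldots,Y_m}^m p_n(x),\quad k\in\mathds{N}_0.
\end{equation*}
This is nothing but the binomial inversion of identity \eqref{eq3.23} in Lemma~\ref{l5}; alternatively, one verifies it directly by substituting \eqref{eq3.23} into the right-hand side and collapsing the resulting double sum via $\binom{k}{m}\binom{m}{j}=\binom{k}{j}\binom{k-j}{m-j}$ and the elementary identity $\sum_{l=0}^{k-j}\binom{k-j}{l}(-1)^l=\delta_{kj}$.

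Next, I would plug this analogue into $\sum_{k=0}^N\mathds{E}p_n(x+S_k)$, exchange the order of summation, and apply the hockey-stick identity \eqref{eq2.15.2} to obtain
\begin{equation*}
\sum_{k=0}^N \mathds{E}p_n(x+S_k)=\sum_{m=0}^N \binom{N+1}{m+1}\mathds{E}\Delta_{Y_1,\ldots,Y_m}^m p_n(x).
\end{equation*}
The upper limit may then be truncated to $n\wedge N$ by virtue of \eqref{eq3.20}, which states that $\mathds{E}\Delta_{Y_1,\ldots,Y_m}^m p_n(x)=0$ whenever $m>n$. This establishes the first equality in \eqref{eq3.29}.

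For the second equality, I would expand each $\mathds{E}\Delta_{Y_1,\ldots,Y_m}^m p_n(x)$ back out using \eqref{eq3.23}, swap the order of summation once more so as to group the terms by the index $k$ of $S_k$, and recognise the resulting coefficient of $\mathds{E}p_n(x+S_k)$ as exactly $c_{n,N}(k)$ from \eqref{eq2.13}. This is the very same combinatorial bookkeeping that closed the proof of Theorem~\ref{th1}, with $I_n(x+k)$ systematically replaced by $\mathds{E}p_n(x+S_k)$.

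There is no serious analytic obstacle; the only point demanding care is the probabilistic inversion in the first step, where one must exploit the fact that the $Y_j$ entering the generalised difference are genuine iid copies of $Y$, so that Lemma~\ref{l5} applies uniformly in $k$. Once this is granted, the argument is purely combinatorial and runs parallel to that of Theorem~\ref{th1}.
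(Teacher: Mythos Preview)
Your proposal is correct and follows essentially the same route as the paper. The only cosmetic difference is that the paper packages your ``binomial inversion of \eqref{eq3.23}'' by introducing the auxiliary sequence $g_{n,x}(k)=\mathds{E}p_n(x+S_k)$, observing via \eqref{eq3.23} that $\Delta^m g_{n,x}(0)=\mathds{E}\Delta_{Y_1,\ldots,Y_m}^m p_n(x)$, and then applying \eqref{eq2.10} directly to $g_{n,x}$; the remaining summation, truncation by \eqref{eq3.20}, and re-expansion via \eqref{eq3.23} to recover $c_{n,N}(k)$ proceed exactly as you outline.
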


\begin{proof}
Fix $n\in \mathds{N}_0$ and $x\in \mathds{R}$ and consider the function
\begin{equation*}
    g_{n,x}(k)=\mathds{E}p_n(x+S_k),\quad k\in \mathds{N}_0.
\end{equation*}
In view of \eqref{eq1.2}, formula \eqref{eq3.23} applied to the function $f(x)=p_n(x)$ tells us that
\begin{equation*}
    \Delta^m g_{n,x}(0)= \mathds{E}\Delta_{Y_1,\ldots ,Y_m}^m p_n(x).
\end{equation*}
This implies, by virtue of \eqref{eq2.10}, that
\begin{equation*}
    \mathds{E}p_n(x+S_k)=\sum_{m=0}^k \binom{k}{m}\mathds{E}\Delta_{Y_1,\ldots ,Y_m}^m p_n(x).
\end{equation*}
We therefore have from \eqref{eq2.15.2} and \eqref{eq3.20}
\begin{equation*}
    \begin{split}
        & \sum_{k=0}^N \mathds{E}p_n(x+S_k)=\sum_{k=0}^N \sum_{m=0}^k \binom{k}{m} \mathds{E}\Delta_{Y_1,\ldots, Y_m}^m p_n(x) \\
         & =\sum_{m=0}^N \mathds{E}\Delta_{Y_1,\ldots ,Y_m}^m p_n(x) \sum_{k=m}^N \binom{k}{m}=\sum_{m=0}^{n\wedge N} \binom{N+1}{m+1}\mathds{E}\Delta_{Y_1,\ldots ,Y_m}^m p_n(x).
     \end{split}
\end{equation*}
Having in mind \eqref{eq3.23}, the proof of the second equality in \eqref{eq3.29} follows along the lines of that of the second equality in \eqref{eq2.12}. The proof is complete.
\end{proof}

As a consequence, we obtain the following extension of the classical formula for sums of powers on arithmetic progressions.

\begin{corollary}\label{c8}
We have
\begin{equation*}
    \sum_{k=0}^N \mathds{E}I_n(x+S_k)=\sum_{m=0}^{n\wedge N}\binom{N+1}{m+1}m! S_Y (n,m;x)=\sum_{k=0}^{n\wedge N}c_{n,N}(k) \mathds{E}I_n (x+S_k).
\end{equation*}
\end{corollary}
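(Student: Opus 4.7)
The corollary is an immediate specialization of Theorem~\ref{th7}, so my plan is essentially a one-step reduction followed by a substitution using the identification of $S_Y(n,m;x)$ established in Theorem~\ref{th6}.

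First, I would simply apply Theorem~\ref{th7} to the polynomial $p_n(x) = I_n(x) = x^n$, which is of exact degree $n$. The outer equality (first expression equals third expression) is then immediate from the second equality in \eqref{eq3.29}, since the coefficients $c_{n,N}(k)$ do not depend on the particular choice of polynomial. No further argument is needed for that piece.

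Next, for the middle expression, Theorem~\ref{th7} gives the intermediate sum
\begin{equation*}
    \sum_{m=0}^{n\wedge N}\binom{N+1}{m+1}\mathds{E} \Delta_{Y_1,\ldots ,Y_m}^m I_n(x).
\end{equation*}
I would then invoke the first equality in \eqref{eq3.27} from Theorem~\ref{th6}, which states $S_Y(n,m;x) = \frac{1}{m!}\mathds{E}\Delta_{Y_1,\ldots,Y_m}^m I_n(x)$, equivalently $\mathds{E}\Delta_{Y_1,\ldots,Y_m}^m I_n(x) = m! S_Y(n,m;x)$. Substituting this into the sum above yields exactly $\sum_{m=0}^{n\wedge N}\binom{N+1}{m+1}m! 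S_Y(n,m;x)$, matching the middle term of the claim.

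There is no real obstacle here: both ingredients (Theorem~\ref{th7} and the first representation in \eqref{eq3.27}) have already been established, and specialization to $p_n = I_n$ is legitimate because $I_n$ is a polynomial of exact degree $n$ and $\mathds{E}e^{r|Y|} < \infty$ guarantees all the moments $\mathds{E}(x + S_k)^n$ appearing in the expressions are finite. The only bookkeeping point worth mentioning is the boundary case $m = 0$, which is covered by the conventions in \eqref{eq3.22} and gives the term $(N+1)I_n(x)$ on the right-hand side, matching the $k=0$ contribution on the left.
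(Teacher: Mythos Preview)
Your proposal is correct and follows exactly the same approach as the paper's own proof: apply Theorem~\ref{th7} with $p_n(x)=I_n(x)$ and invoke the first equality in \eqref{eq3.27} to replace $\mathds{E}\Delta_{Y_1,\ldots,Y_m}^m I_n(x)$ by $m!\,S_Y(n,m;x)$. The paper states this in a single sentence, but the content is identical to what you wrote.
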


\begin{proof}
Apply Theorem~\ref{th7} with $p_n(x)=I_n(x)$ and the first equality in \eqref{eq3.27}.
\end{proof}

\section{Examples}\label{s4}

We keep here the same notations used in Sections~\ref{s1} and \ref{s2}, particularly, the random variables $Y$, $(Y_j)_{j\geq 1}$ and $(S_k)_{k\geq 0}$ given in \eqref{eq1.6} and \eqref{eq1.7}, and the coefficients $c_{n,N}(k)$ defined in \eqref{eq2.13}.

\paragraph{\textbf{a) Some specific examples}}
Each choice of the probability law of the random variable $Y$ gives us a different expression for $S_Y(n,m;x)$, $m\leq n$. We mention here the following concrete examples.

\begin{example}\label{ex4.1}
Let $\alpha\in \mathds{R}$. Recall (see, for instance, Mez\H{o} \cite{Mez2010}, Mihoubi and Tiachachat \cite{MihTia2014}, and El-Desouky \textit{et al.} \cite{ElDCakShi2017}) that the $x$-Whitney numbers of the second kind $W_\alpha (n,m;x)$ are defined via their generating function
\begin{equation}\label{eq4.1.a}
    \sum_{n=m}^\infty W_\alpha (n,m;x) \dfrac{z^n}{n!}= \dfrac{e^{zx}}{m!} \left (\dfrac{e^{\alpha z}-1}{\alpha}\right )^m.
\end{equation}
Choosing $Y=\alpha$ and comparing \eqref{eq3.28} with \eqref{eq4.1.a}, we immediately see that
\begin{equation*}
    S_Y(n,m;x)=\alpha^m W_\alpha (n,m;x),\quad m\leq n.
\end{equation*}
\end{example}

\begin{example}\label{ex4.2}
Let $0<p\leq 1$. Suppose that $Y$ has the Bernoulli law
\begin{equation*}
    P(Y=1)=p=1-P(Y=0).
\end{equation*}
Since
\begin{equation*}
    \mathds{E}e^{zY}-1=p(e^z-1),
\end{equation*}
we readily have from \eqref{eq3.28}
\begin{equation*}
    S_Y(n,m;x)=p^m S(n,m;x),\quad m\leq n.
\end{equation*}
\end{example}

\begin{example}\label{ex4.3}
Recall that a random variable $X$ has the normal distribution with mean $\mu \in \mathds{R}$ and variance $\sigma^2 >0$, denoted by $X\sim N(\mu, \sigma^2)$, if its probability density is given by
\begin{equation*}
    \rho(\theta)=\dfrac{1}{\sqrt{2\pi\sigma^2}} e^{-(\theta-\mu)^2/2\sigma^2},\quad \theta \in \mathds{R}.
\end{equation*}
Suppose that $Y\sim N(0,1)$. Denote by $H(x)=(H_n(x))_{n\geq 0}$ the sequence of Hermite polynomials. It is known that such polynomials can be written in probabilistic terms as (cf. Withers \cite{Wit2000}, Adell and Lekuona \cite{AdeLek2006}, or Ta \cite{Ta2015})
\begin{equation}\label{eq4.1.b}
    H_n(x)=\mathds{E}(x+iY)^n,
\end{equation}
where $i$ is the imaginary unit. It is also well known (cf. Johnson \textit{et al.} \cite[p. 91]{JohKotBal1994}) that $S_k\sim N(0,k)$ and, therefore, $S_k$ has the same law as $\sqrt{k}Y$. This implies, by virtue of \eqref{eq1.8} and \eqref{eq4.1.b}, that $S_Y(2n+1,m)=0$, $m\leq 2n+1$, as well as
\begin{equation*}
    S_Y(2n,m)=(-1)^nH_{2n}(0)S(n,m),\quad m\leq n.
\end{equation*}
\end{example}

\begin{example}\label{ex4.4}
Suppose that $Y$ has the uniform distribution on $[0,1]$. It follows from the probabilistic representation given in \eqref{eq3.28.2} that
\begin{equation*}
    \mathds{E}S_k^n=\dfrac{S(n+k,k)}{\binom{n+k}{k}}.
\end{equation*}
We therefore have from \eqref{eq1.8}
\begin{equation*}
    S_Y(n,m)=\dfrac{n!}{(n+m)!} \sum_{k=0}^m \binom{n+m}{n+k}(-1)^{m-k}S(n+k,k),\quad m\leq n.
\end{equation*}
\end{example}

\begin{example}\label{ex4.5}
Assume that $Y=UT$, where $U$ and $T$ are two independent random variables such that $U$ is uniformly distributed on $[0,1]$ and $T$ has the exponential density $\rho_1(\theta)=e^{-\theta}$, $\theta>0$. In \cite{AdeLek2017.3}, we have shown the following probabilistic representation for the Stirling numbers of the first kind $s(n,k)$
\begin{equation*}
    s(n,k)=(-1)^{n-k} \binom{n}{k}\mathds{E}S_k^{n-k},\quad k=0,1,\ldots ,n.
\end{equation*}
Thus, we have from \eqref{eq1.8}
\begin{equation*}
    S_Y(n,m)=(-1)^n \dfrac{n!}{(n+m)!} \sum_{k=0}^m \binom{n+m}{n+k}(-1)^{m-k} s(n+k,k),\quad m\leq n.
\end{equation*}
\end{example}

\paragraph{\textbf{b) Rising factorials and the exponential distribution}}

Denote by
\begin{equation*}
    \langle x \rangle _n= x(x+1)\cdots (x+n-1),\quad n\in \mathds{N},\quad \langle x \rangle _0=1,
\end{equation*}
the rising factorial. For fixed $n$, it is easily seen by induction on $m$ that
\begin{equation}\label{eq4.30}
    \Delta^m \langle x \rangle _n= \dfrac{n!}{(n-m)!} \langle x+m \rangle _{n-m},\quad m=0,1,\ldots ,n.
\end{equation}
For any $\alpha >0$, consider the following probability gamma density
\begin{equation}\label{eq4.31}
    \rho_{\alpha}(\theta)=\dfrac{\theta^{\alpha-1}}{\Gamma (\alpha)}e^{-\theta},\quad \theta >0.
\end{equation}

\begin{theorem}\label{th9}
Let $Y$ be a random variable having the exponential density $\rho_1 (\theta)$. Then,
\begin{equation}\label{eq4.32}
    S_Y(n,m)=\binom{n}{m}\langle m \rangle _{n-m},\quad m\leq n.
\end{equation}

As a consequence, we have for any $N\geq n$
\begin{equation}\label{eq4.33}
    \sum_{k=0}^N \langle k \rangle _n= \sum_{m=0}^n \binom{N+1}{m+1}(n)_m \langle m \rangle _{n-m}=\sum_{k=0}^n c_{n,N}(k) \langle k \rangle _n.
\end{equation}
\end{theorem}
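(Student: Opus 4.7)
The plan is to exploit two structural features of the exponential distribution: the partial sums $S_k$ are gamma-distributed with easily computable integer moments, and the rising factorial transforms cleanly under the forward difference operator via \eqref{eq4.30}.

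First, because $Y$ has density $\rho_1(\theta)=e^{-\theta}$, the sum $S_k$ has the gamma density $\rho_k(\theta)$ appearing in \eqref{eq4.31} for $k\geq 1$. A direct integration yields
\[
\mathds{E}S_k^n=\int_0^\infty \theta^n\rho_k(\theta)\,d\theta=\frac{\Gamma(n+k)}{\Gamma(k)}=\langle k\rangle_n,
\]
which also holds for $k=0$ since $S_0=0$, provided we read $\langle 0\rangle_0=1$ and $\langle 0\rangle_n=0$ for $n\geq 1$.

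Next, I substitute this moment formula into the definition \eqref{eq1.8} at $x=0$ and recognize the alternating sum as the $m$th forward difference, in the sense of \eqref{eq1.2}, of the polynomial $x\mapsto\langle x\rangle_n$ evaluated at $x=0$:
\[
S_Y(n,m)=\frac{1}{m!}\sum_{k=0}^m\binom{m}{k}(-1)^{m-k}\langle k\rangle_n=\frac{1}{m!}\left.\Delta^m\langle x\rangle_n\right|_{x=0}.
\]
Applying \eqref{eq4.30} and evaluating at $x=0$ produces $\binom{n}{m}\langle m\rangle_{n-m}$, which is \eqref{eq4.32}. (As a sanity check one can also recover this identity from the generating function \eqref{eq3.28}, since $\mathds{E}e^{zY}-1=z/(1-z)$ and the expansion $(z/(1-z))^m=\sum_{j\geq 0}\frac{\langle m\rangle_j}{j!}z^{m+j}$ matches coefficients after dividing by $m!$.)

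Finally, for \eqref{eq4.33} I apply Corollary~\ref{c8} with $x=0$. The hypothesis $N\geq n$ forces $n\wedge N=n$, and the moment identity above gives $\mathds{E}I_n(S_k)=\langle k\rangle_n$. The first equality in Corollary~\ref{c8}, combined with \eqref{eq4.32} and the simplification $m!\binom{n}{m}=(n)_m$, yields the middle expression in \eqref{eq4.33}, while the second equality in Corollary~\ref{c8} immediately gives the right-hand expression. I do not anticipate any real obstacle: the argument is a short chain of substitutions once $\mathds{E}S_k^n=\langle k\rangle_n$ is identified; the only mild subtlety is handling the $k=0$ boundary consistently with the conventions in \eqref{eq3.22}.
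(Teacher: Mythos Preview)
Your proof is correct and follows essentially the same route as the paper: identify $\mathds{E}S_k^n=\langle k\rangle_n$ via the gamma density, recognize the defining sum for $S_Y(n,m)$ as $\frac{1}{m!}\Delta^m\langle x\rangle_n$ at $x=0$, apply \eqref{eq4.30}, and then invoke Corollary~\ref{c8} for \eqref{eq4.33}. The parenthetical generating-function check is a nice extra but not part of the paper's argument.
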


\begin{proof}
Let $k\in \mathds{N}$. It is well known (cf. Johnson \textit{et al.} \cite[pp. 337--340]{JohKotBal1994}) that the random variable $S_k$ has the gamma density $\rho_k (\theta)$ defined in \eqref{eq4.31}, thus having
\begin{equation}\label{eq4.34}
    \mathds{E}S_k^n=\int_0^\infty \theta^n \rho_k(\theta)\, d\theta = \langle k \rangle _n.
\end{equation}
We therefore have from definition \eqref{eq1.8} and \eqref{eq4.30}
\begin{equation*}
    S_Y(n,m)=\dfrac{1}{m!}\sum_{k=0}^m \binom{m}{k}(-1)^{m-k}\langle k \rangle _n=\dfrac{1}{m!}\Delta^m \langle 0 \rangle _n=\binom{n}{m}\langle m \rangle _{n-m},
\end{equation*}
thus showing \eqref{eq4.32}. Formula \eqref{eq4.33} readily follows from Corollary~\ref{c8}, \eqref{eq4.32}, and \eqref{eq4.34}. The proof is complete.
\end{proof}

\paragraph{\textbf{c) Bell polynomials and the Poisson distribution}}

The Bell polynomials are defined by (see, for instance, Roman \cite[pp. 63--67]{Rom1984})
\begin{equation}\label{eq4.35}
    B_n(x)=\sum_{j=0}^\infty j^n \dfrac{x^j}{j!}e^{-x}=\sum_{j=0}^n S(n,j)x^j,
\end{equation}
where the second equality in \eqref{eq4.35} is known as Dobi\'nski's formula. On the other hand, fix $\lambda \geq 0$ and let $Y$ be a random variable having the Poisson law with mean $\lambda$, i.e.,
\begin{equation}\label{eq4.36}
    P(Y=j)= \dfrac{\lambda ^j}{j!}e^{-\lambda},\quad j\in \mathds{N}_0.
\end{equation}
Observe that
\begin{equation}\label{eq4.37}
    B_n(\lambda)=\mathds{E}Y^n,\quad \lambda \geq 0.
\end{equation}

\begin{theorem}\label{th10}
Let $Y$ be as in \eqref{eq4.36}. Then,
\begin{equation}\label{eq4.38}
    S_Y (n,m)= \sum_{r=m}^n S(n,r)S(r,m)\lambda^r,\quad m\leq n.
\end{equation}
As a consequence, we have for any $N\geq n$
\begin{equation}\label{eq4.39}
    \sum_{k=0}^N B_n(k\lambda)=\sum_{m=0}^n \binom{N+1}{m+1} m! S_Y(n,m)=\sum_{k=0}^n c_{n,N}(k) B_n(k\lambda).
\end{equation}
\end{theorem}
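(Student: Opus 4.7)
The plan is to prove \eqref{eq4.38} by plugging Poisson factorial moments into the third expression of \eqref{eq3.27}, and then to obtain \eqref{eq4.39} as a direct application of Corollary~\ref{c8} combined with \eqref{eq4.37}.

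First I would recall two standard facts about the Poisson law: if $Y$ is Poisson with mean $\lambda$, then $S_k=Y_1+\cdots+Y_k$ is Poisson with mean $k\lambda$; and the $i$th descending factorial moment of a Poisson variable with mean $\mu$ equals $\mu^i$, so
\begin{equation*}
\mathds{E}(S_k)_i=(k\lambda)^i,\quad k,i\in\mathds{N}_0.
\end{equation*}
Inserting this into the third equality of \eqref{eq3.27} with $x=0$ gives
\begin{equation*}
S_Y(n,m)=\frac{1}{m!}\sum_{i=0}^n S(n,i)\,\lambda^i\sum_{k=0}^m\binom{m}{k}(-1)^{m-k}k^i.
\end{equation*}
The inner sum is the classical expression $\Delta^m I_i(0)=m!\,S(i,m)$ for the Stirling numbers of the second kind, which vanishes for $i<m$. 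This shifts the outer summation from $i=0$ to $i=m$ and yields exactly \eqref{eq4.38} with the relabelling $r=i$.

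For \eqref{eq4.39}, apply Corollary~\ref{c8} with $x=0$ and $N\ge n$, so that $n\wedge N=n$. The left-hand side involves $\mathds{E}I_n(S_k)=\mathds{E}S_k^n$; since $S_k$ is Poisson with mean $k\lambda$, identity \eqref{eq4.37} (applied with $\lambda$ replaced by $k\lambda$) gives $\mathds{E}S_k^n=B_n(k\lambda)$, and \eqref{eq4.39} follows at once.

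I do not expect any real obstacle: the only delicate point is keeping the summation ranges consistent via the vanishing convention $S(i,m)=0$ for $i<m$. As an alternative, one could prove \eqref{eq4.38} purely analytically from \eqref{eq3.28}, by substituting $\mathds{E}e^{zY}-1=e^{\lambda(e^z-1)}-1$ and expanding twice with the exponential generating function $(e^w-1)^m/m!=\sum_{r\ge m}S(r,m)w^r/r!$, but the probabilistic route above is more in line with the spirit of the paper and uses only tools already developed in Section~\ref{s3}.
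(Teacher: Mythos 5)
Your argument is correct, and for the identity \eqref{eq4.38} it follows a genuinely different route from the paper. The paper proves \eqref{eq4.38} with generating functions: it substitutes $\mathds{E}e^{zY}=e^{\lambda(e^z-1)}$ into \eqref{eq3.28}, expands $\bigl(e^{\lambda(e^z-1)}-1\bigr)^m/m!$ twice via $(e^w-1)^r/r!=\sum_{n\geq r}S(n,r)z^n/n!$, and reads off the coefficient of $z^n/n!$ — exactly the ``alternative'' you sketch at the end. You instead work from the third equality in \eqref{eq3.27}, inserting the Poisson factorial moments $\mathds{E}(S_k)_i=(k\lambda)^i$ and recognizing the inner alternating sum as $\Delta^m I_i(0)=m!\,S(i,m)$ (which vanishes for $i<m$ by \eqref{eq2.11}); this is precisely the use of the last expression in \eqref{eq3.27} that the paper advertises for $\mathds{N}_0$-valued $Y$ but does not actually carry out here. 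Your route is more probabilistic and stays within finite sums, avoiding the interchange of double series inherent in the generating-function manipulation; the paper's route avoids needing the factorial-moment formula and handles all $r$ at once through one power-series identity. For \eqref{eq4.39} your argument coincides with the paper's: $S_k$ is Poisson with mean $k\lambda$, so $\mathds{E}S_k^n=B_n(k\lambda)$ by \eqref{eq4.37}, and Corollary~\ref{c8} with $x=0$ and $n\wedge N=n$ gives the claim.
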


\begin{proof}
Clearly, we have from \eqref{eq4.36}
\begin{equation*}
    \mathds{E}e^{zY}=e^{\lambda (e^z-1)},\quad z\in \mathds{C}.
\end{equation*}
Applying \eqref{eq3.28} to the classical Stirling numbers, we thus have
\begin{equation*}
    \begin{split}
        & \dfrac{1}{m!}(\mathds{E}e^{zY}-1)^m= \dfrac{1}{m!} \left ( e^{\lambda (e^z-1)}-1\right )^m= \sum_{r=m}^\infty S(r,m) \lambda ^r \dfrac{(e^z-1)^r}{r!} \\
         & =\sum_{r=m}^\infty S(r,m)\lambda^r \sum_{n=r}^\infty \dfrac{S(n,r)}{n!}z^n= \sum_{m=n}^\infty \dfrac{z^n}{n!}\sum_{r=m}^n S(n,r)S(r,m)\lambda^r,
     \end{split}
\end{equation*}
which, by virtue of \eqref{eq3.28}, shows \eqref{eq4.38}. On the other hand, it is well known (cf. Johnson \textit{et al.} \cite[p. 160]{JohKemKot2005}) that $S_k$ has the Poisson law with mean $k\lambda$. By \eqref{eq4.37}, this implies that
\begin{equation*}
    \mathds{E}S_k^n=B_n(k\lambda).
\end{equation*}
Therefore, \eqref{eq4.39} follows by choosing $x=0$ in Corollary~\ref{c8}.
\end{proof}

Formula \eqref{eq4.39} is still valid if we replace $\lambda \geq 0$ by $x\in \mathds{R}$. The reason is that each term in \eqref{eq4.39} is a polynomial in $\lambda$ of degree $n$.

\paragraph{\textbf{d) Polylogarithms and the geometric distribution}}

Recall that the polylogarithm function is defined as
\begin{equation}\label{eq4.40}
    Li_s(z)=\sum_{j=1}^\infty \dfrac{z^j}{j^s},\quad z\in \mathds{C},\quad |z|<1,\quad s\in \mathds{C}.
\end{equation}
On the other hand, fix $0<q<1$ and set $p=1-q$. Let $Y$ be a random variable having the geometric distribution
\begin{equation}\label{eq4.41}
    P(Y=j)=pq^j,\quad j\in \mathds{N}_0.
\end{equation}
Note that
\begin{equation}\label{eq4.42}
    Li_{-n}(q)=\dfrac{q}{p}\sum_{j=0}^\infty (j+1)^n pq^j=\dfrac{q}{p}\mathds{E}(Y+1)^n.
\end{equation}
Here, we will be concerned with multinomial convolutions of the polylogarithm function defined as
\begin{equation}\label{eq4.43}
    Li_{-n}^{\star k}(q)=\sum_{n_1+\cdots + n_k=n} \dfrac{n!}{n_1!\cdots n_k!} Li_{-n_1}(q)\cdots Li_{-n_k}(q),\quad k\in \mathds{N},
\end{equation}
and $L_{i_{-n}}^{\star 0}(q)=\delta_{n0}$. Using \eqref{eq4.42}, we can rewrite \eqref{eq4.43} in probabilistic terms as
\begin{equation}\label{eq4.44}
    \begin{split}
      & Li_{-n}^{\star k}(q)=\left ( \dfrac{q}{p}\right )^k \sum_{n_1+\cdots +n_k=n} \dfrac{n!}{n_1!\cdots n_k!} \mathds{E}(Y_1+1)^{n_1}\cdots \mathds{E}(Y_k+1)^{n_k}\\
      & = \left ( \dfrac{q}{p}\right )^k \mathds{E}(Y_1+1+\cdots +Y_k+1)^n=\left ( \dfrac{q}{p}\right )^k \mathds{E}(S_k+k)^n,\quad k\in \mathds{N}_0.
   \end{split}
\end{equation}

With the preceding notations, we state the following.

\begin{theorem}\label{th11}
Let $Y$ be as in \eqref{eq4.41}. Then,
\begin{equation}\label{eq4.45}
    S_{Y+1}(n,m)=\dfrac{1}{q^m}\sum_{r=m}^n \binom{r}{m}\langle m \rangle_{r-m}S(n,r) \left (\dfrac{q}{p}\right )^r,\quad m\leq n.
\end{equation}
As a consequence, we have for any $N\geq n$
\begin{equation}\label{eq4.46}
\begin{split}
    &\sum_{k=0}^N \left ( \dfrac{p}{q}\right )^k Li_{-n}^{\star k}(q)=\sum_{m=0}^n \binom{N+1}{m+1}m!\, S_{Y+1}(n,m)\\
    &=\sum_{k=0}^n c_{n,N}(k) \left ( \dfrac{p}{q}\right )^k Li_{-n}^{\star k}(q).
\end{split}
\end{equation}
\end{theorem}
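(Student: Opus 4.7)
The plan is to establish \eqref{eq4.45} from the generating-function characterization \eqref{eq3.28} applied to $Y+1$, and then to deduce \eqref{eq4.46} by feeding the result into Corollary~\ref{c8}.

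For \eqref{eq4.45}, I begin with $\mathds{E}e^{zY}=p/(1-qe^z)$, valid for $|qe^z|<1$, which gives
$$\mathds{E}e^{z(Y+1)}-1=\frac{pe^z-(1-qe^z)}{1-qe^z}=\frac{e^z-1}{1-qe^z},$$
using $p+q=1$. The key algebraic trick is to rewrite the denominator as $1-qe^z=p-q(e^z-1)$, so that $(1-qe^z)^{-m}$ admits an expansion in the basis $(e^z-1)^r$ via the binomial series $(1-u)^{-m}=\sum_{r\geq 0}\binom{r+m-1}{r}u^r$. Multiplying by $(e^z-1)^m$, relabeling the summation index, and simplifying the factor $p^{-m}(q/p)^{r-m}=q^{-m}(q/p)^r$, I obtain
$$\frac{1}{m!}\bigl(\mathds{E}e^{z(Y+1)}-1\bigr)^m=\frac{1}{m!\,q^m}\sum_{r=m}^\infty \binom{r-1}{m-1}\left(\frac{q}{p}\right)^r(e^z-1)^r.$$

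I then apply the classical expansion $(e^z-1)^r/r!=\sum_{n\geq r}S(n,r)z^n/n!$ and read off the coefficient of $z^n/n!$. By \eqref{eq3.28}, this yields
$$S_{Y+1}(n,m)=\frac{1}{q^m}\sum_{r=m}^n \frac{r!}{m!}\binom{r-1}{m-1}\left(\frac{q}{p}\right)^r S(n,r),$$
and a direct calculation gives $\frac{r!}{m!}\binom{r-1}{m-1}=\binom{r}{m}\langle m\rangle_{r-m}$, which is exactly \eqref{eq4.45}.

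For \eqref{eq4.46}, I apply Corollary~\ref{c8} to the random variable $Y+1$ at $x=0$. The partial sums built from $Y+1$ are $(Y_1+1)+\cdots+(Y_k+1)=S_k+k$, so identity \eqref{eq4.44} rewrites as $\mathds{E}(S_k+k)^n=(p/q)^k\,Li_{-n}^{\star k}(q)$. Substituting this into Corollary~\ref{c8} (with $N\geq n$ so that $n\wedge N=n$) produces \eqref{eq4.46} verbatim. The main obstacle is the generating-function bookkeeping in the first step: the expansion of $(1-qe^z)^{-m}$ must be carried out in powers of $(e^z-1)$ rather than of $e^z$, so that the Stirling numbers $S(n,r)$ arise naturally; matching the binomial prefactor $\frac{r!}{m!}\binom{r-1}{m-1}$ with the rising-factorial expression $\binom{r}{m}\langle m\rangle_{r-m}$ is then a routine check.
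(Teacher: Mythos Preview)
Your proof is correct and follows essentially the same route as the paper: both compute $\mathds{E}e^{z(Y+1)}-1$, factor the denominator as $p\bigl(1-\tfrac{q}{p}(e^z-1)\bigr)$, expand via the binomial series in powers of $(e^z-1)$, and then invoke the classical Stirling generating function and \eqref{eq3.28}; the only cosmetic difference is that the paper carries the factor $\binom{r}{m}\langle m\rangle_{r-m}$ from the outset whereas you reach it at the end via the identity $\tfrac{r!}{m!}\binom{r-1}{m-1}=\binom{r}{m}\langle m\rangle_{r-m}$. For \eqref{eq4.46} both arguments are identical, applying Corollary~\ref{c8} to $Y+1$ at $x=0$ together with \eqref{eq4.44}.
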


\begin{proof}
Let $z\in \mathds{C}$ with $|e^z-1|<p/q$. By \eqref{eq4.41}, we get
\begin{equation}\label{eq4.47}
    \mathds{E}e^{z(Y+1)}-1=\dfrac{pe^z}{1-qe^z}-1=\dfrac{1}{p}\, \dfrac{e^z-1}{1-\dfrac{q}{p}(e^z-1)}.
\end{equation}
Using the binomial expansion, we have from \eqref{eq3.28} and \eqref{eq4.47}
\begin{equation*}
    \begin{split}
        & \dfrac{1}{m!}\left ( \mathds{E}e^{z(Y+1)}-1\right )^m= \dfrac{1}{m!}\left ( \dfrac{e^z-1}{p} \right )^m\sum_{k=0}^\infty \binom{-m}{k}\left ( -\dfrac{q}{p}\right )^k (e^z-1)^k\\
         & =\dfrac{1}{p^m}\sum_{k=0}^\infty \binom{m+k}{m}\langle m\rangle_k \left ( \dfrac{q}{p}\right )^k \dfrac{(e^z-1)^{m+k}}{(m+k)!}\\
         &=\dfrac{1}{p^m}\sum_{r=m}^\infty \binom{r}{m}\langle m\rangle_{r-m}\left ( \dfrac{q}{p}\right )^{r-m} \sum_{n=r}^\infty \dfrac{S(n,r)}{n!}\, z^n\\
         &= \dfrac{1}{q^m}\sum_{n=m}^\infty \dfrac{z^n}{n!}\sum_{r=m}^n \binom{r}{m}\langle m\rangle_{r-m}\left ( \dfrac{q}{p}\right )^r S(n,r).
     \end{split}
\end{equation*}
Again by \eqref{eq3.28}, this shows formula \eqref{eq4.45}. Finally, expression \eqref{eq4.46} follows from Corollary~\ref{c8}, \eqref{eq4.44}, and \eqref{eq4.45}. The proof is complete.
\end{proof}

\paragraph{\textbf{e) Appell polynomials and moments of random variables}}

Denote by $\mathcal{G}$ the set of real sequences $u=(u_n)_{n\geq 0}$ such that $u_0\neq 0$ and
\begin{equation*}
    G(u,z):=\sum_{n=0}^\infty u_n \dfrac{z^n}{n!}<\infty,\quad z\in \mathds{C},\quad |z|\leq r,
\end{equation*}
for some $r>0$. Let $A(x)=(A_n(x))_{n\geq 0}$ be a sequence of polynomials such that $A(0)=(A_n(0))_{n\geq 0}\in \mathcal{G}$. Recall that $A(x)$ is called an Appell sequence if its generating functions has the form
\begin{equation*}
    G(A(x),z)=G(A(0),z)e^{xz}.
\end{equation*}
Denote by $\mathcal{A}$ the set of Appell sequences. Given $A(x), C(x)\in \mathcal{A}$, the binomial convolution of $A(x)$ and $C(x)$, denoted by $(A\times C)(x)=((A\times C)_n(x))_{n\geq 0}$, is defined as
\begin{equation*}
    (A\times C)_n(x)=\sum_{k=0}^n \binom{n}{k} A_k(0) C_{n-k}(x)=\sum_{k=0}^n \binom{n}{k}C_k(0) A_{n-k}(x).
\end{equation*}
It turns out (cf. \cite{AdeLek2017.2}) that $(A\times C)(x)$ is an Appell sequence characterized by its generating function
\begin{equation}\label{eq4.48}
    G((A\times C)(x),z)=G(A(0),z)G(C(0),z)e^{xz}.
\end{equation}
In fact, $(\mathcal{A},\times)$ is an abelian group with identity element $I(x)=(I_n(x))_{n\geq 0}$. Given $A(x)\in \mathcal{A}$, we consider the $k$-fold binomial convolution $A(k;x)=(A_n(k;x))_{n\geq 0}$ defined as
\begin{equation*}
    A_n(k;x)=(\stackrel{\stackrel{k}{\smile}}{A\times \cdots \times A})(x),\quad k\in \mathds{N},\quad A_n(0;x)=I_n(x).
\end{equation*}
As follows from \eqref{eq4.48}, $A(k;x)$ is an Appell sequence characterized by its generating function
\begin{equation}\label{eq4.49}
    G(A(k;x),z)=G^k(A(0),z)e^{xz},\quad k\in \mathds{N}_0.
\end{equation}
We mention the following examples. The generalized Bernoulli polynomials $B(k;x)=(B_n(k;x))_{n\geq 0}$ of integer order $k$ are defined by means of its generating function as
\begin{equation}\label{eq4.50}
    G(B(k;x),z))=\left ( \dfrac{z}{e^z-1} \right )^ke^{xz},\quad k\in \mathds{N}.
\end{equation}
From \eqref{eq4.49}, we see that $B(k;x)$ is the $k$-fold binomial convolution of the classical Bernoulli polynomials $B(x):=B(1;x)$. Similarly, the generalized Euler polynomials $E(k;x)=(E_n(k;x))_{n\geq 0}$ of integer order $k$ are defined via
\begin{equation}\label{eq4.51}
    G(E(k;x),z)=\left ( \dfrac{z}{e^z+1}\right )^k e^{xz},\quad k\in \mathds{N},
\end{equation}
and, therefore, $E(k;x)$ is the $k$-fold binomial convolution of the Euler polynomials $E(x):=E(1,x)$.

On the other hand, there are Appell sequences $A(x)$ which allow for a probabilistic representation of the form
\begin{equation}\label{eq4.52}
    A_n(x)=\mathds{E}(x+Y)^n,
\end{equation}
for a certain real or complex valued random variable $Y$. In terms of generating functions, \eqref{eq4.52} is equivalent to
\begin{equation}\label{eq4.53}
    G(A(x),z)=\sum_{n=0}^\infty \dfrac{A_n(x)}{n!}\, z^n=\mathds{E}e^{z(x+Y)}.
\end{equation}
For instance, the Bernoulli polynomials satisfy \eqref{eq4.52} with $Y=-1/2+i\zeta_1$, where $\zeta_1$ has the logistic distribution (see Sun \cite{Sun2005} or Ta \cite{Ta2015}) and $i$ is the imaginary unit, the Euler polynomials $E(x)$ fulfil \eqref{eq4.52} with $Y=i\zeta_2$, where $\zeta_2$ has the hyperbolic secant distribution (cf. Sun \cite{Sun2005} or Ta \cite{Ta2015}), and for the Hermite polynomials $H(x)$, \eqref{eq4.52} is true when $Y=iZ$, $Z$ having the standard normal distribution, as seen in \eqref{eq4.1.b}.

We are in a position to state the following result.

\begin{theorem}\label{th12}
Let $A(x)=(A_n(x))_{n\geq 0}$ be an Appell sequence satisfying \eqref{eq4.53}. For any $N\geq n$, we have
\begin{equation*}
    \sum_{k=0}^N A_n(k;x)= \sum_{k=0}^n c_{n,N}(k) A_n(k;x).
\end{equation*}
\end{theorem}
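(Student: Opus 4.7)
The plan is to recognize that under the hypothesis \eqref{eq4.53}, the $k$-fold binomial convolution $A_n(k;x)$ coincides with $\mathds{E}I_n(x+S_k)$, and then invoke Corollary~\ref{c8} directly.

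First, I would compute the generating function of $A(k;x)$ using \eqref{eq4.49} together with the probabilistic representation \eqref{eq4.53}. Setting $x=0$ in \eqref{eq4.53} gives $G(A(0),z)=\mathds{E}e^{zY}$, so
\begin{equation*}
    G(A(k;x),z) = G^k(A(0),z)\,e^{xz} = (\mathds{E}e^{zY})^k\, e^{xz}.
\end{equation*}
On the other hand, using the independence and identical distribution of the $Y_j$'s and the definition \eqref{eq1.7} of $S_k$,
\begin{equation*}
    \mathds{E}e^{z(x+S_k)} = e^{xz}\prod_{j=1}^k \mathds{E}e^{zY_j} = e^{xz}(\mathds{E}e^{zY})^k.
\end{equation*}
Comparing the two expressions and extracting the coefficient of $z^n/n!$ yields
\begin{equation*}
    A_n(k;x) = \mathds{E}(x+S_k)^n = \mathds{E}I_n(x+S_k), \quad k\in\mathds{N}_0.
\end{equation*}

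With this identification in hand, the theorem follows immediately by substituting into Corollary~\ref{c8}: since $N\geq n$ implies $n\wedge N = n$, we obtain
\begin{equation*}
    \sum_{k=0}^N A_n(k;x) = \sum_{k=0}^N \mathds{E}I_n(x+S_k) = \sum_{k=0}^n c_{n,N}(k)\,\mathds{E}I_n(x+S_k) = \sum_{k=0}^n c_{n,N}(k)\,A_n(k;x).
\end{equation*}

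There is no real obstacle here; the only nontrivial step is the identification $A_n(k;x)=\mathds{E}I_n(x+S_k)$, which is a one-line generating function computation once one remembers that the $k$-fold binomial convolution corresponds, at the level of generating functions, to raising $G(A(0),z)$ to the $k$-th power, and that sums of independent copies of $Y$ correspond to products of moment generating functions. After that, Corollary~\ref{c8} does all the work.
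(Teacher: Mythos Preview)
Your proof is correct and follows essentially the same route as the paper: identify $A_n(k;x)=\mathds{E}(x+S_k)^n$ via the generating function computation $G(A(k;x),z)=G^k(A(0),z)e^{xz}=\mathds{E}e^{z(x+S_k)}$, then apply Corollary~\ref{c8}. Your write-up is in fact slightly more explicit than the paper's, spelling out both the use of independence in the product $(\mathds{E}e^{zY})^k$ and the reduction $n\wedge N=n$.
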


\begin{proof}
Using \eqref{eq4.49}, \eqref{eq4.53}, and the independence between the random variables involved, we have
\begin{equation*}
    G(A(k;x),z)=e^{xz} G^k (A(0),z)=\mathds{E}e^{z(x+S_k)},\quad k\in \mathds{N}_0,
\end{equation*}
thus implying that the $k$-fold binomial convolution $A(k;x)$ satisfies \eqref{eq4.53} when the random variable $Y$ is replaced by $S_k$. In other words,
\begin{equation*}
    A_n(k;x)=\mathds{E}(x+S_k)^n,\quad k\in \mathds{N}_0.
\end{equation*}
It therefore suffices to apply Corollary~\ref{c8} to complete the proof.
\end{proof}

In view of the comments preceding Theorem~\ref{th12}, this theorem applies when $A(x)$ is the sequence of Bernoulli, Euler and Hermite polynomials, among others. We finally mention that Kim and Kim \cite{KimKim2017} have recently obtained a closed form expression for certain sums of Appell sequences in terms of Barnes' multiple Bernoulli polynomials.






\section*{References}

\bibliographystyle{elsarticle-num}
\bibliography{AdellLekuonaArXiv2018}

\end{document}